\newtheorem{theorem}{Theorem}[section]
\newtheorem{lemma}[theorem]{Lemma}
\newtheorem{corollary}[theorem]{Corollary}
\newtheorem{remark}[theorem]{Remark}
\theoremstyle{definition}
\newtheorem{definition}[theorem]{Definition}
\theoremstyle{remark}
\newtheorem{problem}[theorem]{Problem}
\numberwithin{equation}{section}
\begin{document}

\title[Restriction of averaging operators]{Restriction of averaging operators to algebraic varieties over finite fields}

\author{Doowon Koh* }
\address{Department of Mathematics\\
Chungbuk National University \\
Cheongju city, Chungbuk-Do 28644 Korea}
\email{koh131@chungbuk.ac.kr}
\thanks{*Corresponding Author\\ This research was supported by  the research grant of   Basic Science Research Program through the National Research Foundation of Korea  funded by the Ministry of Education, Science and Technology (NRF-2015R1A1A1A05001374).}

\author{Seongjun Yeom}
\address{Department of Mathematics\\
Chungbuk National University \\
Cheongju city, Chungbuk-Do 28644 Korea}

\email{mathsj@chungbuk.ac.kr}
\thanks{}

\subjclass[2000]{Primary: 42B05 ; Secondary 11T23 }



\keywords{Algebraic curves, finite fields, restricted averaging operators}

\begin{abstract} We study $L^p\to L^r$ estimates for restricted averaging operators related to algebraic varieties $V$ of $d$-dimensional vector spaces over finite fields $\mathbb F_q$ with $q$ elements.
We observe properties of both the Fourier restriction operator and the averaging operator over $V\subset \mathbb F_q^d.$ As a consequence, we obtain optimal results on the restricted averaging problems for spheres and  paraboloids in dimensions $d\ge2,$ and  cones in odd dimensions $d\ge 3.$  In addition, when  the variety $V$ is a cone lying in an even dimensional vector space over $\mathbb F_q$ and  $-1$ is a square number in $\mathbb F_q$,  we also obtain sharp estimates except for two endpoints.

\end{abstract}

\maketitle

\section{Introduction}
Over the past decade there has been a lot of interest in developing harmonic analysis over finite fields.  
Mockenhaupt and Tao in \cite{MT04} initially studied the finite field restriction problem. The finite field Kakeya problem was posed by Wolff in \cite{Wo}.  These two problems are considered as central problems in Euclidean harmonic analysis. Amazingly, Dvir  in \cite{Dv} recently found a simple proof of the finite field Kakeya conjecture, wherein he invoked the polynomial method. 
There are some serious difficulities in adapting the Dvir's proof to the Euclidean Kakeya problem.
On the other hand the polynomial method plays an important role in improving some problems in harmonic analysis. For instance, Guth in \cite{Gu15} used it to obtain an improvment on the Euclidean  restriction problem (see also \cite{Gu, GK}). This example demonstrates that  finite field analogues can be useful in developing methods for Euclidean analogs.   \\

Finite fields can be an efficient method by which one can introduce a problem to mathematicians in other fields. This is mainly due to finite fields possessing a relatively simple structure.  Furthermore, problems in finite fields are closely related to other mathematical subjects such as algebraic geometry, additive number theory, or combinatorics. 
For these reasons, analysis problems in finite fields  have received much attention in the last few decades   (see, for example, \cite{Ca06, EOT, IK10, Le13, Le14} ). \\

In this paper we study a hybrid of the averaging operators and restriction operators in the finite field setting. Roughly speaking, given an algebraic variety $V\subset \mathbb F_q^d$ and  an  operator $T$ acting on functions $f: \mathbb F_q^d\to \mathbb C,$ a new operator $T_V$ can be defined by restricting $Tf$ to the variety $V$. Then a natural question is to determine the boundedness of the restricted operator $T_V.$  Note that when  $Tf$ is the Fourier transform of $f$, this problem becomes the Fourier restriction problem.   
While it is possible to study the extended Fourier restriction problem for various operators, we shall focus on studying the problem for averaging operators $T$ over algebraic varieties $V\subset \mathbb F_q^d.$ We call this the restricted averaging problem to $V.$ It was observed in \cite{Ko15} that  optimal results can be obtained if the variety $V$ is any curve on two dimensions which does not contain a line. In this paper we extend the work to higher dimensions.

\subsection{Discrete Fourier analysis}
We begin by reviewing definitions and  notation. Let $\mathbb F_q^d$ be a $d$-dimensional vector space over the finite field $\mathbb F_q$ with $q$ elements. Throughout this paper we always assume that $q$ is the power of an odd prime. We endow $\mathbb F_q^d$ with the normalized counting measure $dx.$ We shall use $(\mathbb F_q^d, dx)$ to indicate the $d$-dimensional vector space with the normalized counting measure $dx$. In order to indicate the dual space of $(\mathbb F_q^d, dx)$, we shall use $(\mathbb F_q^d, dm)$  which is endowed with the counting measure $dm.$  Thus,
$$ \int_{\mathbb F_q^d} f(x)~dx = \frac{1}{q^d} \sum_{x\in \mathbb F_q^d} f(x) \quad\mbox{for}~~f:(\mathbb F_q^d, dx) \to \mathbb C,$$
and
$$ \int_{\mathbb F_q^d} g(m)~dm = \sum_{m\in \mathbb F_q^d} g(x) \quad\mbox{for}~~g:(\mathbb F_q^d, dm) \to \mathbb C.$$
Let $\chi$ denote a fixed nontrivial additive character of $\mathbb F_q.$ Our results will be independent of our choice of character.\\

Given a function $g: (\mathbb F_q^d, dm) \to \mathbb C,$  the Fourier transform of $g$ is given by
$$ \widehat{g}(x)=\int_{\mathbb F_q^d} \chi(-x\cdot m) g(m)~dm :=\sum_{m\in \mathbb F_q^d}\chi(-x\cdot m) g(m)  \quad \mbox{for} ~~ x\in (\mathbb F_q^d, dx).$$
On the other hand,  if $f: (\mathbb F_q^d, dx) \to \mathbb C,$ then the inverse Fourier transform of $f$ is given by
$$ f^\vee(m)=\int_{\mathbb F_q^d} \chi(m\cdot x) f(x)~dx := \frac{1}{q^d} \sum_{x\in \mathbb F_q^d}\chi(m\cdot x) f(x)  \quad \mbox{for} ~~ m\in (\mathbb F_q^d, dm).$$
Recall that by orthogonality we have
$$ \sum_{x\in \mathbb F_q^d} \chi(m\cdot x)=\left\{ \begin{array}{ll} 0  \quad&\mbox{if}~~ m\neq (0,\dots,0)\\
q^d  \quad &\mbox{if} ~~m= (0,\dots,0), \end{array}\right.$$
where  $m\cdot x$ is the usual dot-product. By the above orthogonality relation of $\chi$, we obtain  Plancherel's theorem which states
$|| f^\vee\|_{L^2(\mathbb F_q^d, dm)} = \|f\|_{L^2(\mathbb F_q^d, dx)}.$ Namely, Plancherel's theorem yields
$$ \sum_{m\in \mathbb F_q^d} |f^\vee(m)|^2 = \frac{1}{q^d} \sum_{x\in \mathbb F_q^d} |f(x)|^2.$$
Given functions $f, h:(\mathbb F_q^d, dx) \to \mathbb C,$  the convolution $f\ast h$ is defined as
$$ f\ast h(y)=\int_{\mathbb F_q^d} f(y-x) h(x)~dx=\frac{1}{q^d} \sum_{x\in \mathbb F_q^d} f(y-x) h(x)\quad \mbox{for}~~y\in (\mathbb F_q^d, dx).$$
Notice that $(f\ast h)^\vee(m)=f^\vee(m) h^\vee(m)$ for $m\in (\mathbb F_q^d, dm).$\\

Given an algebraic variety $V \subset (\mathbb F_q^d, dx)$,  we endow $V$ with the normalized surface measure $\sigma$ which is defined by the relation 
$$ \int_V f(x)~d\sigma(x) := \frac{1}{|V|} \sum_{x\in V} f(x),$$
where $|V|$ denotes the cardinality of the set $V$ and $f: (\mathbb F_q^d, dx) \to \mathbb C.$
Notice that  $d\sigma(x)$ is replaced by $(q^d/|V|)~ V(x)~ dx.$
Throughout this paper, we shall identify the set $V\subset \mathbb F_q^d$ with the characteristic function $\chi_{V}$ on $V$, so that $V(x) = 1$ for $x \in V$ and $V(x) = 0$ otherwise.

\subsection{Definition of the restricted averaging operator}
With the above notation the averaging operator $A$ is defined by
$$ Af(y)=f\ast \sigma(y)=\int_V f(y-x) ~d\sigma(x) := \frac{1}{|V|} \sum_{x\in V} f(y-x),$$
where $f$ and $Af$ are defined on $(\mathbb F_q^d, dx).$ The averaging operator $A$ was initially introduced for finite fields by Carbery-Stone-Wright \cite{CSW}.  Sharp $L^p\to L^r$ estimates of the averaging operator $A$  were obtained in the case when $V$ is the sphere, the paraboloid, or the cone (\cite{ Ko13, KS11}).
Now, we consider a restricted operator $A_V$ defined by restricting $Af =f\ast \sigma$ to the algebraic variety $V.$
Namely, we have $A_Vf = Af|_V.$
We call the operator $A_V$  the restricted averaging  operator to the algebraic variety $V \subset (\mathbb F_q^d,dx).$
The main purpose of this paper is to study $L^p\to L^r$ estimates for the restricted averaging operator $A_V$.

\begin{problem}\label{1.1}(Restricted averaging problem) Let $\sigma$ be the normalized surface measure on the variety $V \subset (\mathbb F_q^d, dx).$
For $1\le p, r\le \infty,$  we define $A_V(p\to r)$ as the smallest constant $C>0$ such that the estimate
\begin{equation}\label{deRA}\|f\ast \sigma\|_{L^r(V, \sigma)} \le C \|f\|_{L^p(\mathbb F_q^d, dx)} \end{equation}
holds for all functions $f: \mathbb F_q^d\to \mathbb C.$
The quantity $A_V(p\to r)$ may depend on $q$, the cardinality of the underlying finite field $\mathbb F_q.$ The restricted averaging problem to $V$ is to determine all pairs $(p,r)$ such that $1\le p,r\le \infty$ and
$A_V(p\to r)$ is  independent of the field size $q.$
\end{problem}

 For positive numbers $A$  and $B,$ we use $A\lesssim B$ if there is a constant $C>0$ independent of the field size $q$ such that $A\le C B.$ We also use $A\sim B$ to indicate that $A\lesssim B$ and $B\lesssim A.$   In this setting, the restricted averaging problem is to find all pairs $(p, r)$  such that  $1\le p, r \le \infty$ and $A_V(p\to r) \lesssim 1.$ Here, we again stress that the implicit constant in $\lesssim$ is allowed to depend on $d, p,r$ but it must be independent of $q=|\mathbb F_q|.$
\begin{remark} \label{remark1.2} Since $\| 1\|_{L^s(V, \sigma)}=1=\|f\|_{L^s(\mathbb F_q^d, dx)}$ for all $1\le s\le\infty,$  we see from  H\"older's inequality that
$$ A_V(p_2\to r) \le A_V(p_1\to r) \quad \mbox{for}\quad 1\le \ p_1\le p_2 \le \infty$$
and
$$ A_V(p\to r_1)\le A_V(p\to r_2) \quad \mbox{for}\quad 1\le r_1\le r_2 \le \infty$$
which will allow us to reduce the analysis below to certain endpoint estimates.
\end{remark}
As usual, we denote by $A_V^*$ the adjoint operator of the restricted averaging operator to $V.$ Since
$$ \langle A_V f,~ h \rangle_{L^2(V, \sigma)} =\langle f,~ A_V^* h\rangle_{L^2(\mathbb F_q^d, dx)},$$
the adjoint operator $A_V^*$ of $A_V$ is given by
$$ A_V^* h (y) =  \frac{q^d}{|V|^2} \sum_{x\in V} V(x-y)~ h(x)$$
where $h: (V, \sigma) \to \mathbb C$ and $y\in (\mathbb F_q^d, dx).$ Observe that if $V=-V:=\{x\in \mathbb F_q^d: -x\in V\},$ then
$$A_V^* h =\frac{q^{2d}}{|V|^2} (hV)\ast V. $$

By duality, if $1<p,r<\infty,$ then  the estimate (\ref{deRA}) implies that
\begin{equation}\label{dual} \|A_V^* h\|_{L^{p^\prime}(\mathbb F_q^d, dx)} \le C  \|h\|_{L^{r^\prime}(V, \sigma)}~~\mbox{for all}~~ h: V\to \mathbb C,\end{equation}
where $p^\prime =p/(p-1)$ and $r^\prime =r/(r-1).$ We define $A_V^*(r'\to p')$ as the best constant $C>0$ such that  the estimate \eqref{dual} holds.
It follows that  $A_V(p\to r) \lesssim 1 \iff A_V^*(r'\to p')\lesssim 1.$

\subsection{Statement of main results}
Results on the restricted averaging problem are based on geometric properties of the underlying variety $V\subset \mathbb F_q^d.$
The structure of the variety $V$ can be explained in terms of the inverse Fourier transform of the normalized surface measure $\sigma$  on $V.$
Recall that the inverse Fourier transform $\sigma^\vee$ of the surface measure $\sigma$ is defined by
$$ \sigma^\vee(m)=\int_{V} \chi(m\cdot x)~d\sigma(x)=\frac{1}{|V|} \sum_{x\in V}\chi(m\cdot x)\quad \mbox{for}~~m\in (\mathbb F_q^d, dm).$$
 We shall derive certain results on varieties which possess general properties of hypersurfaces such as spheres, paraboloids, or cones in finite fields.  To precisely state  our main results, we need to classify
varieties $V$ according to their Fourier decay.

\begin{definition}  An algebraic variety $V\subset  (\mathbb F_q^d, dx)$ will be called a regular variety
if $|V|\sim q^{d-1}$ and $ |\sigma^\vee (m)|\lesssim q^{-(d-1)/2} $  for all $m\in \mathbb F_q^d\setminus (0,\ldots,0),$ where $\sigma$ denotes the normalized surface measure on the variety $V.$
\end{definition}

As the first main result, we obtain the sharp mapping properties of the restricted averaging operator to a regular variety (see Figure \ref{figure}).
\begin{theorem}\label{mainthm1} Let $\sigma$ be the normalized surface measure on  a regular variety $V\subset (\mathbb F_q^d, dx).$  Then we have
$A_V(p\to r) \lesssim 1$ if and only if $(1/p, 1/r)$ lies on the convex hull of  points $(0,0), (0, 1), ( (d-1)/d, 1),$ and $((d-1)/d, 1/d).$
\end{theorem}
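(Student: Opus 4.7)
The plan is to concentrate on the single critical endpoint $(p,r)=(d/(d-1),d)$ and recover every other point of the claimed convex hull from the trivial bounds $A_V(\infty\to\infty)\lesssim 1$ and $A_V(\infty\to 1)\lesssim 1$ (both immediate from $\sigma$ being a probability measure), together with Remark~\ref{remark1.2} and Riesz--Thorin interpolation. The four vertices of the hull are then accounted for: $(0,0)$ and $(0,1)$ trivially, $((d-1)/d,1/d)$ by the main estimate, and $((d-1)/d,1)$ by monotonicity from the main estimate.

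For the critical endpoint I would use the mean--oscillation split
\[
  (f\ast\sigma)(y) = f^\vee(0) + E_f(y), \qquad E_f(y) := \sum_{m\neq 0} f^\vee(m)\,\sigma^\vee(m)\,\chi(-m\cdot y).
\]
The constant part is harmless since $|f^\vee(0)|\le \|f\|_{L^1(dx)}\le \|f\|_{L^{d/(d-1)}(dx)}$. For the linear operator $f\mapsto E_f$, I would prove two extreme bounds: Plancherel combined with the regularity assumption $|\sigma^\vee(m)|\lesssim q^{-(d-1)/2}$ for $m\ne 0$ gives an $L^2(dx)\to L^2(dx)$ norm of size $q^{-(d-1)/2}$, while Young's inequality together with $\|\sigma\|_{L^\infty(dx)} = q^d/|V|\sim q$ gives an $L^1(dx)\to L^\infty(dx)$ norm of size $q$. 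Riesz--Thorin at $\theta=(d-2)/d$ then produces $f\mapsto E_f$ from $L^{d/(d-1)}(dx)$ into $L^d(dx)$ with operator norm $\lesssim q^{-1/d}$. Finally, the crude transfer $\|E_f\|_{L^d(V,\sigma)}^d\le (q^d/|V|)\|E_f\|_{L^d(dx)}^d\sim q\,\|E_f\|_{L^d(dx)}^d$ absorbs exactly the outstanding factor $q^{1/d}$, leaving the uniform bound.

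For necessity, two test functions suffice. With $f=\delta_0$, one has $A_Vf\equiv 1/|V|$ on $V$, so comparing $\|A_Vf\|_{L^r(V,\sigma)}\sim q^{-(d-1)}$ with $\|f\|_{L^p(dx)}=q^{-d/p}$ forces $1/p\le (d-1)/d$. Taking $f$ to be the indicator of the set $x_0-V$ for some $x_0\in V$, one has $A_Vf(x_0)=1$, giving $\|A_Vf\|_{L^r(V,\sigma)}\ge|V|^{-1/r}\sim q^{-(d-1)/r}$ against $\|f\|_{L^p(dx)}\sim q^{-1/p}$; this forces the diagonal constraint $1/p\le (d-1)/r$. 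Combined with the trivial ranges $0\le 1/p,1/r\le 1$, these inequalities carve out exactly the claimed quadrilateral.

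The main technical obstacle is the Riesz--Thorin bookkeeping at the critical exponent: one must verify that the opposing scales $q^{-(d-1)/2}$ and $q$ at the interpolation endpoints conspire to give precisely $q^{-1/d}$, which then cancels the transfer factor $q^{1/d}$ from $dx$ to $\sigma$ with no room to spare. Everything else is either a trivial endpoint bound or a routine appeal to Remark~\ref{remark1.2} and interpolation.
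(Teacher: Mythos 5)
Your proof is correct, and its skeleton matches the paper's: reduce to the single endpoint $A_V(d/(d-1)\to d)$ via Remark \ref{remark1.2} and Riesz--Thorin, split $\sigma=1+\widehat{K}$ with $K=\sigma^\vee-\delta_{\bf 0}$, kill the constant part by H\"older, and interpolate an $L^1\to L^\infty$ bound of size $q$ against an $L^2$ bound coming from the Fourier decay $|\sigma^\vee(m)|\lesssim q^{-(d-1)/2}$. The differences are in two local steps, and both of your substitutions work. First, at the $L^2$ endpoint the paper proves \eqref{kkey2}, an estimate $L^2(dx)\to L^2(V,\sigma)$ of size $q^{(2-d)/2}$, by invoking the restriction/extension estimate of Lemma \ref{Lem3.1} (cost $q^{1/2}$) on top of the decay of $K$; you instead stay entirely in $L^2(\mathbb F_q^d,dx)$ via Plancherel (norm $q^{-(d-1)/2}$), interpolate to $L^{d/(d-1)}(dx)\to L^d(dx)$ with norm $q^{-1/d}$, and only then pass to $L^d(V,\sigma)$ by the trivial comparison $\|g\|_{L^d(V,\sigma)}^d\le (q^d/|V|)\|g\|_{L^d(dx)}^d\sim q\,\|g\|_{L^d(dx)}^d$. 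The bookkeeping is identical ($q^{1/2}$ weighted by $1-\theta=2/d$ in the paper versus $q^{1/d}$ paid once at the end), but your version dispenses with Lemma \ref{Lem3.1} altogether, which is slightly more elementary; the price is that this trick is specific to the exponent $r=d$, whereas the paper's restriction estimate is reused later for the cone. Second, for necessity the paper derives $1/(p(d-1))\le 1/r$ by testing the dual inequality \eqref{dual} with a point mass on $V$ (Lemma \ref{Lem2.1}), while you test the primal inequality with the indicator of $x_0-V$; these are dual manifestations of the same computation and both give exactly the line through $(0,0)$ and $((d-1)/d,1/d)$, so together with $1/p\le(d-1)/d$ and the trivial constraints you do recover precisely the stated quadrilateral.
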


It is well known (\cite{IR07, MT04}) that typical examples of regular varieties in $\mathbb F_q^d$ are the sphere $S_j:=\{ x\in \mathbb F_q^d: x_1^2+x_2^2+\cdots+x_d^2=j\ne 0\}$ and the paraboloid $P:=\{x\in \mathbb F_q^d: x_1^2+x_2^2+ \cdots+x_{d-1}^2 = x_d\}$. Thus,  Theorem \ref{mainthm1} provides us with the best result on the restricted averaging problem for the sphere $S_j$ and the paraboloid $P$.
However, if a variety $V\subset \mathbb F_q^d$ is not a regular variety, then it may not be simple to prove the sharp $L^p \to L^r$ estimates of the restricted averaging operator, because the  variety $V$ may contain a large dimensional affine subspace which no longer has any curvature.   
Recall that a cone $C$ in $\mathbb F_q^d$ is defined as
\begin{equation}\label{defcone} C=\{x\in \mathbb F_q^d: x_1^2+x_2^2 + \cdots+ x_{d-2}^2=x_{d-1}x_d\}.\end{equation}
The regular property of the cone $C\subset \mathbb F_q^d$  depends on the dimension $d\ge 3.$
In fact, we shall see from  Corollary \ref{cor4.4} in Section \ref{sec4} that  the cone $C \subset \mathbb F_q^d$ is not a regular variety in even dimensions $d\ge 4$  but  the cone $C$ is a regular variety in odd dimensions $d\ge 3$.
Hence,   if the dimension, $d\ge 3,$ is odd, then   the sharp $L^p \to L^r$ estimates for the  operator $A_C$ follows immediately from Theorem \ref{mainthm1}.
For this reason,  we shall focus on studying this problem for the cone in even dimensions $d\ge 4.$
In this paper, except for endpoints, we shall establish the sharp mapping properties of the restricted averaging operator to the cone $C$ in even dimensions.
More precisely we have the following result (see Figure \ref{figure}).
\begin{theorem}\label{mainthm2} Let $\sigma_c$ be the normalized surface measure on the cone $C \subset \mathbb F_q^d$ defined as in $\eqref{defcone}.$
Denote by $\Omega$ the convex hull of points $(0,0), (0,1), ((d-1)/d, 1),$
$$P_1:= \left(\frac{d-1}{d}, \frac{1}{d-2}  \right)~~\mbox{and}~~ P_2:=\left(\frac{d^2-3d+2}{d^2-2d+2}, \frac{d-2}{d^2-2d+2}  \right).$$
If the dimension, $d\geq 4,$ is even, then we have the following results:\\
\begin{enumerate}
\item
If $(1/p, 1/r) \in \Omega\setminus\{P_1,P_2\}, $ then $A_C(p\to r)\lesssim 1$

\item
 If $-1\in \mathbb F_q$ is a square number and $A_C(p\to r)\lesssim 1$, then $(1/p, 1/r) \in \Omega$.

\item
If we put $P_1=(1/p, 1/r),$ then the following restricted type inequality holds: $$ \|f\ast \sigma_c\|_{L^r(C, \sigma_c)}\lesssim \|f\|_{L^p(\mathbb F_q^d, dx)}\quad \mbox{for all characteristic functions} ~~f: \mathbb F_q^d \to \mathbb C.$$

\item
If we put $P_2=(1/p, 1/r)$, then  the weak-type estimate
$$ \|f\ast \sigma_c\|_{L^{r, \infty}(C, \sigma_c)} \lesssim \|f\|_{L^{p}(\mathbb F_q^d, dx)}$$
holds.

\end{enumerate}

\end{theorem}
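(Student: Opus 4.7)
The main input is a Gauss-sum decomposition of $\sigma_c^\vee$ which is implicit in Corollary~\ref{cor4.4}. Using the identity $C(x)=q^{-1}\sum_{t\in\mathbb F_q}\chi(tQ(x))$ with $Q(x):=x_1^2+\cdots+x_{d-2}^2-x_{d-1}x_d$, completing squares in $x_1,\ldots,x_{d-2}$, and evaluating the resulting Gauss sums (which are pure powers of $q$ since $d-2$ is even), I would arrive at a decomposition
\[
 \sigma_c^\vee(m)=\delta_0(m)+K_g(m)+K_b(m),
\]
where $K_b$ is supported on the \emph{dual cone} $C^*:=\{m\ne 0:\,4m_{d-1}m_d=m_1^2+\cdots+m_{d-2}^2\}$ with $|K_b(m)|\lesssim q^{-(d-2)/2}$, and $K_g$ is supported off $C^*$ with $|K_g(m)|\lesssim q^{-d/2}$. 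Only the $K_b$ term obstructs $C$ from being a regular variety; its slow decay on the thin set $C^*$ (of size $\sim q^{d-1}$) is precisely what shifts the Theorem~\ref{mainthm1} endpoint $((d-1)/d,1/d)$ outward to $P_1$ and forces the auxiliary endpoint $P_2$.

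\textbf{Positive bounds and endpoints.} I would first attack the restricted strong-type estimate at $P_1$ (part~(3)) by taking $f=\mathbf{1}_E$ and expanding $\|\mathbf{1}_E\ast\sigma_c\|_{L^{d-2}(C,\sigma_c)}^{d-2}$. After inserting the above decomposition, the $K_g$-contribution is controlled exactly as in the proof of Theorem~\ref{mainthm1} (indeed the bound $|K_g|\lesssim q^{-d/2}$ is stronger than the regular decay $q^{-(d-1)/2}$), while the $K_b$-contribution becomes, via Plancherel on the $C^*$-side, an incidence count that balances the sparsity of $C^*$ against the weaker decay $q^{-(d-2)/2}$; the exponent $r=d-2$ is precisely the one at which this balance closes. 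The weak-type bound at $P_2$ in part~(4) then follows by Marcinkiewicz interpolation between the $P_1$ estimate and the trivial strong-type bounds at $(0,0)$ and $(0,1)$. Part~(1) is obtained by further Riesz--Thorin interpolation of these endpoint bounds with the strong-type inequality at $((d-1)/d,1)$---the latter being a routine consequence of duality and $|C|\sim q^{d-1}$---together with Remark~\ref{remark1.2}.

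\textbf{Necessity and the main obstacle.} For part~(2), the assumption that $-1\in\mathbb F_q$ is a square is used to produce maximal affine subspaces inside $C$: the form $x_1^2+\cdots+x_{d-2}^2$ then splits into $(d-2)/2$ hyperbolic planes and contains a totally isotropic subspace of dimension $(d-2)/2$, which together with a free choice of $x_d$ at $x_{d-1}=0$ yields a subspace $W\subset C$ of dimension $d/2$. Testing \eqref{deRA} against $f=\mathbf{1}_W$, and separately against characteristic functions of lower-dimensional sections of $C$, produces two families of extremal lower bounds on $A_C(p\to r)$ whose joint validity forces $(1/p,1/r)\in\Omega$ and pins down $P_1$ and $P_2$ as the two boundary vertices where both families are simultaneously saturated. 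I expect the principal technical obstacle to be the $P_1$ restricted strong-type estimate: the $K_b$-term cannot be absorbed by Plancherel or Hausdorff--Young alone, and must be treated by a level-set/incidence argument on $C^*$ that genuinely exploits the characteristic-function hypothesis. Once this single endpoint is in hand, the remaining parts of the theorem reduce to interpolation, duality, and the subspace test-function examples.
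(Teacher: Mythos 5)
Your decomposition of $\sigma_c^\vee$ into $\delta_0$, a good part of size $q^{-d/2}$ off the dual cone $C^*$, and a bad part of size $q^{-(d-2)/2}$ on $C^*$ is exactly the structure the paper extracts (Corollary \ref{cor4.4}), and your treatment of parts (2) and (3) matches the paper in substance: the $d/2$-dimensional isotropic subspace built from $i^2=-1$ feeds the affine-subspace necessary condition \eqref{Affinene} with $\alpha=d/2$, and the $P_1$ restricted estimate is closed by playing the sparsity of $C^*$ against the weaker decay there via an energy/incidence bound $\sum_{m\in C^*}|E^\vee(m)|^2\lesssim q^{-d-1}|E|+q^{-3d/2}|E|^2$ (Lemma \ref{Lem4.5}), with the characteristic-function hypothesis entering through a case split on $|E|$ versus $q^{d/2}$. (The paper reaches $r=d-2$ by interpolating an $L^1\to L^\infty$ bound with constant $q$ against an $L^{2d/(d+2)}\to L^2$ bound with constant $q^{-(d-4)/2}$, rather than expanding the $L^{d-2}$ norm directly, but that is a cosmetic difference.)

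The genuine gap is your derivation of part (4). You claim the weak-type bound at $P_2$ follows by Marcinkiewicz interpolation between the $P_1$ estimate and the trivial bounds at $(0,0)$ and $(0,1)$. It cannot: $P_2$ is a \emph{vertex} of $\Omega$ lying strictly outside the convex hull of $(0,0)$, $(0,1)$, $((d-1)/d,1)$ and $P_1$. Concretely, for $d=4$ one has $P_1=(3/4,1/2)$ and $P_2=(3/5,1/5)$, while the lower edge of the hull of the other points is the segment from $(0,0)$ to $P_1$, i.e.\ $1/r=(2/3)(1/p)$, which at $1/p=3/5$ gives $1/r=2/5>1/5$; in general $P_2$ sits on the line $1/r=1/(p(d-1))$ coming from the delta-function necessary condition, below that segment. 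No interpolation theorem produces estimates outside the convex hull of its inputs, so $P_2$ must be attacked separately. The paper does this by dualizing: it proves the restricted-type bound $\|A_C^*F\|_{L^{(d^2-2d+2)/d}(\mathbb F_q^d,dx)}\lesssim\|F\|_{L^{(d^2-2d+2)/(d^2-3d+4)}(C,\sigma_c)}$ for $F\subset C$, using $A_C^*F=\frac{q^{2d}}{|C|^2}F\ast C$, the analogous decomposition $C=\widehat H+|C|/q^d$, and the same Lemma \ref{Lem4.5}. Because your part (4) fails, your part (1) is also incomplete: the portion of $\Omega$ below the segment from $(0,0)$ to $P_1$ (including the open edges $(0,0)$--$P_2$ and $P_1$--$P_2$) is not reachable from the bounds you actually establish. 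You need a second, independent endpoint argument at $P_2$.
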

\begin{figure}
\centering
\includegraphics[width=0.8\textwidth]{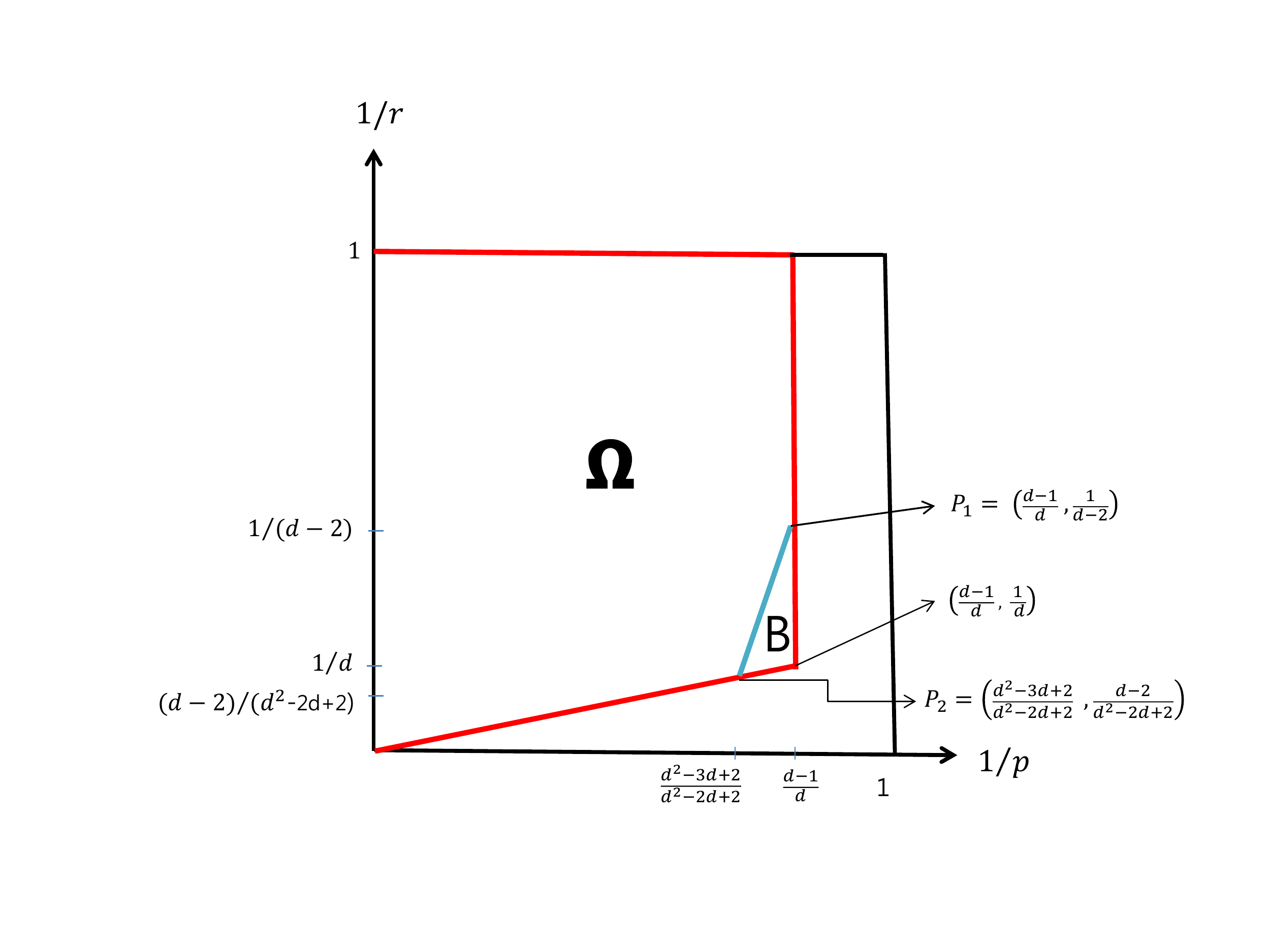}
\vspace{-1cm}
\caption{ The region $\Omega\cup B$ is related to Theorem \ref{mainthm1} which gives optimal  $L^p\to L^r$ result for regular varieties. On the other hand, the region $\Omega\setminus\{P_1, P_2\}$ indicates the conclusion of Theorem \ref{mainthm2} which provides us of the sharp results except for two endpoints $P_1, P_2$ for the cone in even dimensions $d\ge 4$ provided that $-1\in \mathbb F_q$ is a square number.}
\label{figure}
\end{figure}

\section{Necessary Conditions }\label{sec4}
 Let $V\subset \mathbb F_q^d$ be an algebraic variety with $|V|\sim q^{d-1}.$
We denote by $\sigma$ the normalized surface measure on $V.$
Then we have the following necessary conditions for the boundedness of $A_V(p\to r).$
\begin{lemma}\label{Lem2.1} Let $1\le p,r \le \infty.$
Assume that $A_V(p\to r)\lesssim 1.$ Then we must have
\begin{equation}\label{Ne12} \frac{1}{p} \le \frac{d-1}{d} \quad \mbox{and}\quad \frac{1}{p(d-1)} \le \frac{1}{r}.
\end{equation}
In addition, if we assume that $V$ contains an affine subspace $\Pi$ with $|\Pi|=q^\alpha$, then we must have
\begin{equation}\label{Affinene} \frac{d-\alpha}{p(d-1-\alpha)} \le \frac{1}{r} +1.
\end{equation}
\end{lemma}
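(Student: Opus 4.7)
The strategy is to test the defining inequality \eqref{deRA}, or its dual \eqref{dual}, against characteristic functions of carefully chosen sets, express both sides as explicit powers of $q$, and read off the constraint on $(1/p,1/r)$ forced by uniformity in the field size.

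For the first bound in \eqref{Ne12}, I would plug $f=\chi_{\{0\}}$ into \eqref{deRA}: one computes $f\ast\sigma=V/|V|$, hence $\|f\ast\sigma\|_{L^r(V,\sigma)}=1/|V|\sim q^{-(d-1)}$, while $\|f\|_{L^p(\mathbb F_q^d,dx)}=q^{-d/p}$, and uniformity in $q$ forces $d/p\le d-1$. For the second bound in \eqref{Ne12}, I would instead use the dual inequality \eqref{dual}, testing on $h=\chi_{\{x_0\}}$ for a fixed $x_0\in V$. The explicit formula for $A_V^{*}$ recorded in the paper shows that $A_V^{*}h$ takes the constant value $q^d/|V|^2$ on the shifted variety $x_0-V$ and vanishes elsewhere, so both sides of \eqref{dual} are read off immediately as powers of $q$, and the inequality reduces to $1/p\le (d-1)/r$.

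To prove \eqref{Affinene}, I would first observe that $A_V(p\to r)$ is translation invariant in $V$: a translation of $V$ simply precomposes $f$ with a translation, convolution commutes with translation, and $L^p(\mathbb F_q^d,dx)$ norms are translation invariant. Hence I may assume that the affine subspace $\Pi\subset V$ passes through the origin and is therefore a linear subspace of dimension $\alpha$. Testing \eqref{deRA} with $f=\chi_\Pi$ gives $\|f\|_{L^p(\mathbb F_q^d,dx)}=q^{(\alpha-d)/p}$. The decisive point is that linearity of $\Pi$ implies $y-\Pi=\Pi\subset V$ for every $y\in\Pi$, so
\[
f\ast\sigma(y)=\frac{|V\cap(y-\Pi)|}{|V|}\ge\frac{q^\alpha}{|V|}\quad\text{for all }y\in\Pi.
\]
Restricting the sum in $\|f\ast\sigma\|_{L^r(V,\sigma)}$ to $y\in\Pi\subset V$ yields $\|f\ast\sigma\|_{L^r(V,\sigma)}\gtrsim(q^\alpha/|V|)^{1+1/r}\sim q^{-(d-1-\alpha)(1+1/r)}$, and comparing this exponent of $q$ against $\|f\|_{L^p}=q^{-(d-\alpha)/p}$ and rearranging produces exactly \eqref{Affinene}.

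The argument is essentially bookkeeping: choose the right test set (a single point for \eqref{Ne12}, the subspace $\Pi$ itself for \eqref{Affinene}), keep the two measures $dx$ and $d\sigma$ and the size $|V|\sim q^{d-1}$ carefully separated, and read off exponents of $q$. The sole conceptual input, used only in the third step, is that a linear subspace contained in $V$ is automatically closed under subtraction, which is what converts $\Pi\subset V$ into the pointwise lower bound $f\ast\sigma(y)\ge q^\alpha/|V|$ on $\Pi$. A minor caveat worth flagging: the denominator in \eqref{Affinene} tacitly requires $\alpha<d-1$, but in the borderline case $\alpha=d-1$ the right-hand side is $+\infty$ and the statement is vacuous.
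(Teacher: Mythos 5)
Your proposal is correct and follows essentially the same route as the paper: the first two conditions are obtained exactly as in the paper by testing \eqref{deRA} with a point mass and \eqref{dual} with a point mass on $V$, and all the exponent bookkeeping checks out. For \eqref{Affinene} you work on the primal side, testing \eqref{deRA} with $f=\chi_\Pi$ and using $y-\Pi=\Pi\subset V$ for $y\in\Pi$, whereas the paper tests the dual inequality \eqref{dual} with $h=\chi_\Pi$ and lower-bounds $\|A_V^*\Pi\|_{L^{p'}}$ on a translate of the direction subspace; these are mirror images of one another under duality, rest on the same geometric fact, and yield the identical condition, with your version arguably a touch more direct since it avoids introducing the auxiliary set $M$. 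One tiny quibble with your closing caveat: when $\alpha=d-1$ it is the left-hand side of \eqref{Affinene} that degenerates (forcing $1/p=0$ after clearing the denominator), not the right-hand side becoming $+\infty$, but this borderline case plays no role in the argument.
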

\begin{proof}
For each ${\bf a} \in  \mathbb F_q^d$, define
$$ \delta_{\bf a}(x) =\left\{\begin{array}{ll}  1 \quad &\mbox{if} \quad x={\bf a} \\
                                         0 \quad &\mbox{if} \quad x\ne {\bf a} .\end{array}\right. $$
If we test \eqref{deRA} with $f$ equal to $\delta_{\bf 0},$ 
we obtain 
$$ \|\delta_{\bf 0}\ast \sigma\|_{L^r(V, \sigma)} \lesssim \|\delta_{\bf 0}\|_{L^p(\mathbb F_q^d, dx)}.$$
Since $\|\delta_{\bf 0}\|_{L^p(\mathbb F_q^d, dx)}=q^{-\frac{d}{p}}$  and $\|\delta_{\bf 0}\ast \sigma\|_{L^r(V, \sigma)}=1/|V|\sim q^{-(d-1)},$
we obtain a necessary condition $ 1/p \le (d-1)/d$ for the boundedness of  $A_V(p\to r).$
In order to obtain another necessary condition $ 1/(p(d-1))\le 1/r,$ we shall use the estimate $\eqref{dual}.$
If we test \eqref{dual} with $h$ equal to $\delta_{\bf a}$ for some ${\bf a}\in V$, we must have
$$  \|A_V^* \delta_{\bf a}\|_{L^{p^\prime}(\mathbb F_q^d, dx)} \lesssim  \|\delta_{\bf a}\|_{L^{r^\prime}(V, \sigma)}.$$
Notice that for $x\in (\mathbb F_q^d, dx),$  we have
$$A_V^* \delta_{\bf a} (x) =  \frac{q^d}{|V|^2} \sum_{y\in V} V(y-x)~ \delta_{\bf a} (y) =\frac{q^d}{|V|^2} V({\bf a}-x) . $$
It is not hard to see that
 $$\|A_V^* \delta_{\bf a}\|_{L^{p^\prime}(\mathbb F_q^d, dx)}=\left(\frac{|V|}{q^d}\right)^{1/p^\prime} \frac{q^d}{|V|^2} \sim q^{-d+2-1/p^\prime}$$
and
$$ \|\delta_{\bf a}\|_{L^{r^\prime}(V, \sigma)}=|V|^{-1/r^\prime} \sim q^{-(d-1)/r^\prime}.$$
Thus, we must have $-d+2-1/p^\prime\le -(d-1)/r^\prime$ which yields another necessary condition $ 1/(p(d-1))\le 1/r$ for the boundedness of  $A_V(p\to r).$
Now we prove the necessary condition $\eqref{Affinene}$.  Assume that the variety $V$ contains an affine subspace $\Pi$ with $|\Pi|=q^\alpha.$
If we test \eqref{dual} with $h$ equal to the characteristic function on $\Pi$, we obtain
\begin{equation}\label{eq1}\|A_V^* \Pi\|_{L^{p^\prime}(\mathbb F_q^d, dx)} \lesssim  \|\Pi\|_{L^{r^\prime}(V, \sigma)}.\end{equation}

It is clear that
\begin{equation}\label{eq2}
 \|\Pi\|_{L^{r^\prime}(V, \sigma)} =\left(\frac{|\Pi|}{|V|}\right)^{1/r^\prime} \sim q^{(\alpha-d+1)/r^\prime}.
\end{equation}
Since $\Pi \subset V$, we see that $A_V^* \Pi (x) =  \frac{q^d}{|V|^2} \sum_{y\in \Pi} V(y-x)$ for $x\in (\mathbb F_q^d, dx).$ It follows 
$$ \|A_V^* \Pi\|^{p^\prime}_{L^{p^\prime}(\mathbb F_q^d, dx)}=\frac{q^{dp^{\prime}-d}}{|V|^{2p^{\prime}}} \sum_{x\in \mathbb F_q^d} \left( \sum_{y\in \Pi} V(y-x)\right)^{p^\prime}.$$
Since $\Pi \subset V$ is an affine subspace,  we can choose a set $ M \subset \mathbb F_q^d$ such that $|M|=|\Pi|$ and $ \sum_{y\in \Pi} V(y-x) =|\Pi|$ for all $x\in M.$
Thus, we see that
$$  \|A_V^* \Pi\|^{p^\prime}_{L^{p^\prime}(\mathbb F_q^d, dx)}\ge \frac{q^{dp^{\prime}-d} |\Pi|^{1+p^{\prime}}}{|V|^{2p^{\prime}}} \sim \frac{q^{dp^{\prime}-d}q^{\alpha(1+p^{\prime})}}{q^{2p^\prime (d-1)}}=q^{(\alpha-d+2)p^{\prime} +\alpha-d},$$
where we used the conditions that $|\Pi|=q^\alpha$ and $|V|\sim q^{d-1}.$
It follows that
$$  \|A_V^* \Pi\|_{L^{p^\prime}(\mathbb F_q^d, dx)}\gtrsim q^{\alpha-d+2 +(\alpha-d)/p^{\prime}}.$$
From this inequality, $\eqref{eq1}$, and $\eqref{eq2}$, we must have
$$\alpha-d+2 +(\alpha-d)/p^{\prime}\le (\alpha-d+1)/r^\prime$$
which yields the necessary condition \eqref{Affinene} for the boundedness of  $A_V(p\to r).$
\end{proof}

\section{Proof of Theorem \ref{mainthm1}}

In this section, we shall prove Theorem \ref{mainthm1}. 
The necessary part for the boundedness of $A_V(p\to r)$ follows immediately from a direct consequence of \eqref{Ne12} in Lemma \ref{Lem2.1}.
It remains to prove the sufficient condition for the boundedness of $A_V(p\to r).$
To prove this,  observe  that  $A_V(\infty\to \infty) \lesssim 1.$ Now, By the Riesz-Thorin interpolation theorem (see Theorem 1.7 in \cite{BS88}) and  Remark \ref{remark1.2},  it will be enough to show
$$ A_V\left(\frac{d}{d-1}\to d\right) \lesssim 1.$$
Thus, our task is to establish the following estimate
\begin{equation}\label{keyestimate}
\|f\ast \sigma\|_{L^d(V, \sigma)} \lesssim \|f\|_{L^{d/(d-1)}(\mathbb F_q^d, dx)} ~~\mbox{for all}~~ f: (\mathbb F_q^d, dx) \to \mathbb C.
\end{equation}

For each $m\in (\mathbb F_q^d, dm)$, define $ K(m)= \sigma^\vee(m)-\delta_{\bf 0} (m).$
Then the measure $\sigma$ can be identified with the function $ \sigma(x)=\widehat{K}(x)+ \widehat{\delta_{\bf 0}}(x) =\widehat{K}(x) +1 $ for $x\in (\mathbb F_q^d, dx).$
To obtain the estimate $\eqref{keyestimate}$,  it suffices to prove the following estimates:
\begin{equation}\label{key1}
\|f\ast 1\|_{L^d(V, \sigma)} \lesssim \|f\|_{L^{d/(d-1)}(\mathbb F_q^d, dx)} ~~\mbox{for all}~~ f: (\mathbb F_q^d, dx) \to \mathbb C,
\end{equation}
\begin{equation}\label{key2}
\|f\ast \widehat{K}\|_{L^d(V, \sigma)} \lesssim \|f\|_{L^{d/(d-1)}(\mathbb F_q^d, dx)} ~~\mbox{for all}~~ f: (\mathbb F_q^d, dx) \to \mathbb C.
\end{equation}
Since $\max_{x\in V} |f\ast 1(x)| \le \|f\|_{L^1(\mathbb F_q^d, dx)},$  the estimate $\eqref{key1}$  follows by observing
$$ \|f\ast 1\|_{L^d(V, \sigma)} \le \|f\|_{L^1(\mathbb F_q^d, dx)} \|1\|_{L^d(V, \sigma)} \le \|f\|_{L^{d/(d-1)}(\mathbb F_q^d, dx)}.$$
Now notice that  $\eqref{key2}$ can be obtained by interpolating the following estimates:
\begin{equation} \label{kkey1}
\|f\ast \widehat{K}\|_{L^\infty(V, \sigma)} \lesssim q \|f\|_{L^1(\mathbb F_q^d, dx)} ~~\mbox{for all}~~ f: (\mathbb F_q^d, dx) \to \mathbb C.
\end{equation}
and
\begin{equation}\label{kkey2}
\|f\ast \widehat{K}\|_{L^2(V, \sigma)} \lesssim q^{\frac{-d+2}{2}}\|f\|_{L^{2}(\mathbb F_q^d, dx)} ~~\mbox{for all}~~ f: (\mathbb F_q^d, dx) \to \mathbb C.
\end{equation}
Thus, our task is to show that $\eqref{kkey1}$ and $\eqref{kkey2}$ hold.
Let us prove $\eqref{kkey1}.$
Observe
$$ \max_{y\in \mathbb F_q^d} |\widehat{K}(y)| =\max_{y\in \mathbb F_q^d} | \sigma(y)-1| = \max_{y\in \mathbb F_q^d} \left|\frac{q^dV(y)}{|V|} -1\right| \le \frac{q^d}{|V|} \sim q.$$
Then the estimate $\eqref{kkey1}$ follows by observing that for any $x\in V,$
$$  |f\ast \widehat{K}(x)| \le \left( \max_{y\in \mathbb F_q^d} |\widehat{K}(y)| \right) \frac{1}{q^d} \sum_{y\in \mathbb F_q^d} |f(x-y)| \lesssim q \|f\|_{L^1(\mathbb F_q^d, dx)}.$$
Finally, we shall prove the estimate $\eqref{kkey2}.$
From the definition of the function $K$ and the assumption on a regular variety $V$,  we see that
\begin{equation}\label{decaygood}
\max_{m\in \mathbb F_q^d}  |K(m)|  \lesssim q^{-\frac{(d-1)}{2}},
\end{equation}
which shall be used to prove $\eqref{kkey2}.$
In addition, we shall use the following restriction estimate.
\begin{lemma}\label{Lem3.1} Let $\sigma$ be the normalized surface measure on a variety $V \subset (\mathbb F_q^d, dx)$ with $|V|\sim q^{d-1}.$
Then we have
$$ \|\widehat{g}\|_{L^2(V, \sigma)} \lesssim q^{\frac{1}{2}} \|g\|_{L^2(\mathbb F_q^d, dm)}\quad \mbox{for all} ~~g:(\mathbb F_q^d, dm) \to \mathbb C.$$
\end{lemma}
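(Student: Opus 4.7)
The plan is to derive the estimate as an immediate consequence of Plancherel's theorem, using only the trivial pointwise bound $V(x) \le 1$ together with the cardinality hypothesis $|V| \sim q^{d-1}$. No curvature or Fourier decay of $\sigma$ is needed; in particular, the regularity hypothesis used elsewhere in the section plays no role in this lemma.

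Concretely, I would first unwind the left-hand side via the definition of the normalized surface measure on $V$:
$$\|\widehat{g}\|_{L^2(V,\sigma)}^2 \;=\; \frac{1}{|V|}\sum_{x\in V}|\widehat{g}(x)|^2 \;\le\; \frac{1}{|V|}\sum_{x\in \mathbb F_q^d}|\widehat{g}(x)|^2.$$
Next, I would invoke Plancherel's theorem in the form stated in the paper. With the normalization $dx = q^{-d}\#$ on the spatial side and $dm = \#$ on the frequency side, Plancherel reads
$$\sum_{x\in \mathbb F_q^d}|\widehat{g}(x)|^2 \;=\; q^d\,\|\widehat{g}\|_{L^2(\mathbb F_q^d,dx)}^2 \;=\; q^d\,\|g\|_{L^2(\mathbb F_q^d,dm)}^2.$$
Combining these two displays and using $|V|\sim q^{d-1}$ produces the factor $q^d/|V|\sim q$, and taking square roots yields the asserted inequality with the implicit constant independent of $q$.

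I do not anticipate any real obstacle: the lemma is the trivial $L^2$ restriction bound, and the only care required is to track the two different measure normalizations (counting vs.\ normalized counting on $\mathbb F_q^d$, and normalized surface measure on $V$ vs.\ counting on $V$). The sharpness of the $q^{1/2}$ factor for a generic hypersurface of size $q^{d-1}$ reflects the fact that without further Fourier decay one cannot do better than the Plancherel bound.
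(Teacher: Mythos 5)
Your proof is correct. The only substantive difference from the paper is the direction of attack: you bound the restricted $L^2$ norm directly, writing $\|\widehat{g}\|_{L^2(V,\sigma)}^2=\frac{1}{|V|}\sum_{x\in V}|\widehat{g}(x)|^2\le\frac{1}{|V|}\sum_{x\in\mathbb F_q^d}|\widehat{g}(x)|^2$ and then invoking Plancherel, whereas the paper dualizes first and verifies the equivalent extension estimate $\|(f\sigma)^\vee\|_{L^2(\mathbb F_q^d,dm)}\sim q^{1/2}\|f\|_{L^2(V,\sigma)}$, which by $\sigma(x)=\frac{q^d}{|V|}V(x)$ and Plancherel is an exact identity up to constants. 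Both arguments use only $|V|\sim q^{d-1}$ and Plancherel, and you are right that no Fourier decay or regularity enters. Your route is marginally more elementary (no duality step); the paper's route has the small bonus of exhibiting the dual estimate as a two-sided equivalence, which shows the $q^{1/2}$ operator norm is sharp rather than merely an upper bound. One bookkeeping point: the paper states Plancherel only for the inverse transform $f\mapsto f^\vee$, while you use it for the forward transform $g\mapsto\widehat{g}$ with the normalization $\frac{1}{q^d}\sum_x|\widehat{g}(x)|^2=\sum_m|g(m)|^2$; this follows from the same orthogonality relation and is used implicitly elsewhere in the paper, so it is not a gap, but it is worth stating explicitly.
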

\begin{proof} By duality, it suffices to prove the following extension estimate:
$$ \|(f\sigma)^{\vee} \|_{L^2(\mathbb F_q^d, dm)} \sim q^{\frac{1}{2}} \|f\|_{L^2(V, \sigma)} \quad \mbox{for all} ~~f: V \to \mathbb C.$$
Since $\sigma(x)=\frac{q^d}{|V|} V(x) $, it follows from Plancherel's theorem that
\begin{align*}\|(f\sigma)^{\vee} \|_{L^2(\mathbb F_q^d, dm)} &= \frac{q^d}{|V|}\| (fV)^\vee \|_{L^2(\mathbb F_q^d, dm)} = \frac{q^d}{|V|} \|fV\|_{L^2(\mathbb F_q^d, dx)} \\
&=\frac{q^{d/2}}{|V|^{1/2}} \|f\|_{L^2(V, \sigma)} \sim q^{\frac{1}{2}} \|f\|_{L^2(V, \sigma)}.\end{align*}
\end{proof}
To complete the proof of the estimate \eqref{kkey2}, we  write
$ \|f\ast \widehat{K}\|_{L^2(V, \sigma)} = \| \widehat{ f^\vee K}\|_{L^2(V, \sigma)},$ and apply Lemma \ref{Lem3.1} and  \eqref{decaygood}.
Then we see that
\begin{align*} \|f\ast \widehat{K}\|_{L^2(V, \sigma)} &= \| \widehat{ f^\vee K}\|_{L^2(V, \sigma)} \lesssim q^{1/2} \|f^\vee K\|_{L^2(\mathbb F_q^d, dm)} \\
&\lesssim q^{1/2} q^{-(d-1)/2}\|f^\vee \|_{L^2(\mathbb F_q^d, dm)} =q^{(-d+2)/2} \|f \|_{L^2(\mathbb F_q^d, dx)} ,\end{align*}
where  Plancherel's theorem was used to obtain the last equality. Thus, our proof is complete.

\section{Properties of the cone}
Recall that the cone $C \subset \mathbb F_q^d$ is defined as the set
$$ \{x\in \mathbb F_q^d: x_{d-1}x_d= x_1^2 + x_2^2 + \cdots + x_{d-2}^2\}$$
and  $\sigma_c$ denotes the normalized surface measure on the cone $C.$
In this section, we collect preliminary lemmas  which play an important role  in proving Theorem  \ref{mainthm2}.
Observing the decay of the inverse Fourier transform on the cone,  one may analyze the structural features of the cone.
As we shall see below, the inverse Fourier transform on the cone is closely related to the classical Gauss sum.
Recall that the Gauss sum $G$ is defined by
$$G=\sum_{t\in \mathbb F_q} \eta(t)\chi(t)~~\mbox{and}~~|G|=q^{\frac{1}{2}}$$
where $\eta$ denotes the quadratic character of $\mathbb F_q^*.$
Since $\sum_{t\in \mathbb F_q} \chi(at^2)=G\eta(a) $ for $a\in \mathbb F_q^*,$ Completing the square and using the change of variables, we see
\begin{equation}\label{square}
 \sum_{t\in \mathbb F_q} \chi(at^2+bt) =G \eta(t) \chi\left( \frac{b^2}{-4a}  \right) \quad\mbox{for}~~a\in \mathbb F_q^*, ~ b\in \mathbb F_q.
\end{equation}
The inverse Fourier transform on the cone  can be explicitly expressed.

\begin{lemma}\label{lem4.1} Let  $C\subset \mathbb F_q^d$ be the cone. For each $\ell\in \mathbb F_q$ and  $\xi=(\xi_1,\xi_2, \ldots, \xi_d)\in \mathbb F_q^d,$ define
$\Gamma_\ell(\xi)=\xi_1^2+\xi_2^2+\cdots+\xi_{d-2}^2-\ell\xi_{d-1}\xi_d.$ Then we have the following results:
\begin{enumerate}
\item
If the dimension, $d\ge 4,$ is even,   then
$$ C^\vee(m)= \left\{\begin{array}{ll} \frac {\delta_{{\bf 0}}(m)}{q}+\frac{(q-1)G^{d-2}}{q^d} ~~ &\mbox{for} ~~\Gamma_4(m)=0 \\
                                                                                     \frac{-G^{d-2}}{q^d}  ~~ &\mbox{for}~~ \Gamma_4(m)\ne 0. \end{array} \right.$$
\item
 If the dimension, $d\ge 3,$ is odd,   then
$$ C^\vee(m)= \left\{\begin{array}{ll} \frac {\delta_{{\bf 0}}(m)}{q}~~ &\mbox{for} ~~ \Gamma_4(m)=0\\
                                                           \frac{G^{d-1}}{q^d}~ \eta(-\Gamma_4(m)) ~~ &\mbox{for}~~ \Gamma_4(m)\ne 0. \end{array} \right.$$
\end{enumerate}
\end{lemma}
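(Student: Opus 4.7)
The plan is to expand $C^\vee(m)=q^{-d}\sum_{x\in C}\chi(m\cdot x)$ directly by parametrizing the cone, using the orthogonality relation to enforce its defining equation, and then reducing everything to Gauss sums via \eqref{square}.

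Writing $m=(m',m_{d-1},m_d)$ with $m'=(m_1,\dots,m_{d-2})$ and summing over $y=(x_1,\dots,x_{d-2})\in\mathbb F_q^{d-2}$ and $(a,b)=(x_{d-1},x_d)\in\mathbb F_q^2$, I first rewrite the cone condition as
\[
\mathbf{1}[ab=|y|^2]=\frac{1}{q}\sum_{t\in\mathbb F_q}\chi\bigl(t(ab-|y|^2)\bigr),
\]
where $|y|^2=y_1^2+\cdots+y_{d-2}^2$. Splitting off the $t=0$ term gives the contribution $q^{d-1}\delta_{\mathbf 0}(m)$, which will produce the $\delta_{\mathbf 0}(m)/q$ piece after dividing by $q^d$. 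For $t\neq 0$, I evaluate the $(a,b)$ sum by doing $b$ first, which forces $a=-m_d/t$ and yields the factor $q\,\chi(-m_{d-1}m_d/t)$. The $y$ sum then factors into $d-2$ one-dimensional Gauss sums, and applying \eqref{square} with $a=-t$, $b=m_i'$ to each coordinate produces $G^{d-2}\eta(-t)^{d-2}\chi\bigl(|m'|^2/(4t)\bigr)$.

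Collecting these pieces, I arrive at the identity
\[
\sum_{x\in C}\chi(m\cdot x)=q^{d-1}\delta_{\mathbf 0}(m)+G^{d-2}\sum_{t\neq 0}\eta(-t)^{d-2}\,\chi\!\left(\frac{\Gamma_4(m)}{4t}\right).
\]
The two cases now follow by examining the parity of $d-2$. When $d$ is even, $\eta(-t)^{d-2}=1$, so the remaining sum is $q-1$ if $\Gamma_4(m)=0$ and $-1$ otherwise (by summing $\chi$ over $\mathbb F_q^*$ after the substitution $u=\Gamma_4(m)/(4t)$). When $d$ is odd, $\eta(-t)^{d-2}=\eta(-t)$, which vanishes upon summation if $\Gamma_4(m)=0$; otherwise the same substitution combined with $\eta(1/u)=\eta(u)$ and $\eta(4)=1$ pulls out $\eta(-\Gamma_4(m))$ and leaves $\sum_{u\neq 0}\eta(u)\chi(u)=G$, giving the factor $G^{d-1}\eta(-\Gamma_4(m))$. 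Dividing by $q^d$ in each case reproduces the stated formulas.

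The main bookkeeping hurdle is the odd-dimensional case: one must carefully track $\eta(-1)$, the invariance $\eta(u^{-1})=\eta(u)$, and the harmless factor $\eta(4)=1$ to recognize the final Gauss-sum identity $\sum_{u\neq 0}\eta(u)\chi(u)=G$. Everything else is routine orthogonality plus a direct application of \eqref{square}.
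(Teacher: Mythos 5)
Your proposal is correct and follows essentially the same route as the paper: both enforce the cone equation via the orthogonality relation $\mathbf{1}[\Gamma_1(x)=0]=q^{-1}\sum_t\chi(t\Gamma_1(x))$, split off the $t=0$ term to produce $\delta_{\mathbf 0}(m)/q$, complete the square in the first $d-2$ coordinates via \eqref{square} to extract $G^{d-2}$, collapse the $(x_{d-1},x_d)$ sum by orthogonality, and finish with the parity case analysis on $\eta^{d-2}$. The only differences are cosmetic (your parameter $t$ is the paper's $-s$, and you carry out the odd-dimensional change of variables explicitly where the paper only sketches it).
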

\begin{proof}  Notice that $C=\{x\in \mathbb F_q^d: \Gamma_1(x)=0\}.$
 By the definition of the inverse Fourier transform and the orthogonality relation of $\chi,$ we see that
\begin{align*} C^\vee(m)&=q^{-d} \sum_{x\in C} \chi(m\cdot x)=q^{-d}\sum_{x\in \mathbb F_q^d: \Gamma_1(x)=0} \chi(m\cdot x)\\
                                 & =q^{-d} \sum_{x\in \mathbb F_q^d}  \left( q^{-1} \sum_{s\in \mathbb F_q} \chi\left(s \Gamma_1(x)\right) \right) \chi(m\cdot x)\\
                                  &=\frac{\delta_{{\bf 0}}(m)}{q} + q^{-d-1} \sum_{s\ne 0} \sum_{x\in \mathbb F_q^d} \chi\left(s \Gamma_1(x)+\chi(m\cdot x)\right)\\
                                 &=\frac{\delta_{{\bf 0}}(m)}{q} + \frac{1}{q^{d+1}}\sum_{s\ne 0}  \sum_{x\in \mathbb F_q^d}\chi(s(x_1^2+\dots+x_{d-2}^2-x_{d-1}x_d)) \chi(m\cdot x)
\end{align*}
 By the formula \eqref{square} we see that
$$ C^{\vee}(m)= \frac{\delta_{{\bf 0}}(m)}{q}+ \frac{G^{d-2}}{q^{d+1}} \sum_{s\ne 0} \eta^{d-2}(s)\chi\left( \frac{m_1^2+\cdots + m_{d-2}^2}{-4s}\right)  I (m_{d-1}, m_d),$$
where we define

$$ I(m_{d-1}, m_d)= \sum_{x_{d-1}\in \mathbb F_q} \chi(m_{d-1}x_{d-1}) \sum_{x_d\in \mathbb F_q} \chi( (-sx_{d-1}+m_d) x_d ).$$
Compute the sum over $x_d\in \mathbb F_q$ by the orthogonality relation of $\chi$ and obtain that
$$ C^\vee(m)=\frac{\delta_{{\bf 0}}(m)}{q}+ \frac{G^{d-2}}{q^{d}} \sum_{s\ne 0} \eta^{d-2}(s) \chi\left(\frac {m_1^2+\cdots+m_{d-2}^2-4 m_{d-1}m_d}{-4s}\right).$$
Since $\eta^{d-2} =1$ for  even $d\geq 4$ ,  the first statement of Lemma \ref{lem4.1} follows. To prove the second part of Lemma \ref{lem4.1},
we first note that if the dimension $d\geq 3$, is odd, then $\eta^{d-2}(s)=\eta(s)=\eta(s^{-1}) $ for $s\neq 0.$ Therefore, if $m_1^2+\cdots+m_{d-2}^2-4m_{d-1}m_d=0$, the statement follows immediately from the orthogonality relation of $\eta.$  On the other hand, if $m_1^2+\cdots+m_{d-2}^2-4m_{d-1}m_d\neq 0,$ then the statement follows  from a change of variables, the definition of the Gauss sum, and properties of the quadratic character $\eta.$
\end{proof}

We need the following lemma.
\begin{lemma} \label{remark4.2} Let $C\subset \mathbb F_q^d$ be the cone. If $d\ge 3$, then we have
$|C|\sim q^{d-1}.$
\end{lemma}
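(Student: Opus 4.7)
The plan is to read off $|C|$ directly from Lemma \ref{lem4.1} by evaluating the inverse Fourier transform of the cone at the origin. From the definition $C^{\vee}(m) = q^{-d}\sum_{x \in C} \chi(m \cdot x)$, setting $m = 0$ gives the identity $|C| = q^{d}\, C^{\vee}(0)$. Since $\Gamma_{4}(0) = 0$, the value of $C^{\vee}(0)$ is given by the first branch of each case in Lemma \ref{lem4.1}.

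In the odd case $d \geq 3$, the first branch reads simply $C^{\vee}(0) = 1/q$, so $|C| = q^{d-1}$ exactly, which is obviously $\sim q^{d-1}$. In the even case $d \geq 4$, the first branch yields
\begin{equation*}
|C| \;=\; q^{d-1} + (q-1)\, G^{d-2}.
\end{equation*}
Here $d-2$ is even, and using $|G| = q^{1/2}$ one has $|G^{d-2}| = q^{(d-2)/2}$, so the error term is bounded in absolute value by $(q-1)\,q^{(d-2)/2} \lesssim q^{d/2}$. Since $d \geq 4$ gives $d-1 \geq d/2 + 1$, this error is a strictly lower-order term than the main contribution $q^{d-1}$, and hence $|C| = q^{d-1} + O(q^{d/2}) \sim q^{d-1}$.

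There is essentially no obstacle here beyond bookkeeping: the only point deserving attention is verifying that the error term in the even case does not overwhelm the main term, which is immediate from $d \geq 4$. Note that the argument also gives a lower bound on $|C|$, confirming that $|C|$ is a positive integer comparable to $q^{d-1}$ uniformly in $q$, as needed.
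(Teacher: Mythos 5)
Your proposal is correct and follows essentially the same route as the paper: both evaluate $C^\vee(0,\ldots,0)=|C|/q^d$ via Lemma \ref{lem4.1} and use $|G|=\sqrt{q}$ to see that the Gauss-sum term in even dimensions is of lower order than $q^{d-1}$. Your write-up just spells out the error-term bookkeeping slightly more explicitly than the paper does.
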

\begin{proof}
From the definition of the inverse Fourier transform on the cone $C\subset \mathbb F_q^d$ and the conclusion of Lemma \ref{lem4.1},
 we  see  that
$$ C^\vee(0,\ldots,0)= \frac{|C|}{q^d} =\left\{ \begin{array}{ll} q^{-1} + \frac{(q-1)G^{d-2}}{q^d} \quad&\mbox{for even}~~d\ge 4\\
                                                             q^{-1} \quad&\mbox{for odd}~~d\ge 3.\end{array}\right.$$
Since the absolute value of the Gauss sum is $\sqrt{q}$  (namely, $|G|=\sqrt{q}$),  we conclude 
$$|C|\sim q^{d-1} \quad\mbox{for}~~d\ge 3.$$
\end{proof}

Since $|G|=\sqrt{q},$ the following result is immediate from  Lemma \ref{lem4.1}.

\begin{corollary}\label{cor4.3} Let $C\subset \mathbb F_q^d$ be the cone.  Assume that $m\in \mathbb F_q^d \setminus \{(0,\ldots, 0)\}.$
Then the following two statements hold:
\begin{enumerate}
\item
If the dimension, $d\ge 4,$ is even,   then
$$ |C^\vee(m)|\sim \left\{\begin{array}{ll} q^{-\frac{d}{2}} ~~ &\mbox{for} ~~\Gamma_4(m)=0 \\
                                                                                    q^{- \frac{(d+2)}{2}}  ~~ &\mbox{for}~~ \Gamma_4(m)\ne 0. \end{array} \right.$$
\item
 If the dimension, $d\ge 3,$ is odd,   then
$$ |C^\vee(m)|= \left\{\begin{array}{ll} 0~~ &\mbox{for} ~~ \Gamma_4(m)=0\\
                                                           q^{-\frac{(d+1)}{2}}~~ &\mbox{for}~~ \Gamma_4(m)\ne 0. \end{array} \right.$$
\end{enumerate}
\end{corollary}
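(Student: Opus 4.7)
The plan is to read Corollary 4.3 off directly from Lemma 4.1 by substituting $m \ne (0,\ldots,0)$ and invoking the standard identity $|G| = \sqrt{q}$ together with the fact that $|\eta(t)| = 1$ for every $t \in \mathbb F_q^*$. Under the hypothesis $m \ne (0,\ldots,0)$, the term $\delta_{\bf 0}(m)/q$ appearing in both parts of Lemma 4.1 simply vanishes, leaving a single closed-form expression in each subcase that can be estimated by taking absolute values.

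For the even case $d \ge 4$: when $\Gamma_4(m) = 0$ the lemma gives $C^\vee(m) = (q-1)G^{d-2}/q^d$, and since $|G|^{d-2} = q^{(d-2)/2}$ and $q-1 \sim q$, I get $|C^\vee(m)| \sim q \cdot q^{(d-2)/2}/q^d = q^{-d/2}$. When $\Gamma_4(m) \ne 0$ the lemma gives $C^\vee(m) = -G^{d-2}/q^d$, so $|C^\vee(m)| = q^{(d-2)/2}/q^d = q^{-(d+2)/2}$, and this is an exact equality (hence certainly $\sim$).

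For the odd case $d \ge 3$: the formula $C^\vee(m) = \delta_{\bf 0}(m)/q = 0$ handles $\Gamma_4(m) = 0$ immediately. When $\Gamma_4(m) \ne 0$, the element $-\Gamma_4(m)$ lies in $\mathbb F_q^*$, so $|\eta(-\Gamma_4(m))| = 1$, and therefore $|C^\vee(m)| = |G|^{d-1}/q^d = q^{(d-1)/2}/q^d = q^{-(d+1)/2}$.

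There is essentially no obstacle here; the corollary is pure bookkeeping on top of Lemma 4.1. The only point worth flagging is that in the even case with $\Gamma_4(m) = 0$, the conclusion must be stated with $\sim$ rather than $=$ because the factor $(q-1)$ is only comparable to, not equal to, $q$; every other subcase yields an exact equality for $|C^\vee(m)|$.
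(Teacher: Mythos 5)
Your proposal is correct and is exactly the paper's argument: the paper states the corollary "is immediate from Lemma \ref{lem4.1}" using $|G|=\sqrt{q}$, and your case-by-case substitution (with $\delta_{\bf 0}(m)=0$ and $|\eta(-\Gamma_4(m))|=1$) is precisely that bookkeeping. Your remark about why the even, $\Gamma_4(m)=0$ case needs $\sim$ rather than $=$ is a correct and sensible observation.
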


The following result can be obtained by using Corollary \ref{cor4.3}.
\begin{corollary}\label{cor4.4} Let $\sigma_c$ be the normalized surface measure on the cone $C\subset \mathbb F_q^d.$
Suppose that $ (0,\ldots,0)\ne m \in \mathbb F_q^d.$ Then we have the following facts:
\begin{enumerate}
\item
If the dimension, $d\ge 4,$ is even,   then
$$ |\sigma_c^\vee(m)|\sim \left\{\begin{array}{ll} q^{-\frac{(d-2)}{2}} ~~ &\mbox{for} ~~\Gamma_4(m)=0 \\
                                                                                    q^{- \frac{d}{2}}  ~~ &\mbox{for}~~ \Gamma_4(m)\ne 0. \end{array} \right.$$
\item
 If the dimension, $d\ge 3,$ is odd,   then
$$ |\sigma_c^\vee(m)|= \left\{\begin{array}{ll} 0~~ &\mbox{for} ~~ \Gamma_4(m)=0\\
                                                           q^{-\frac{(d-1)}{2}}~~ &\mbox{for}~~ \Gamma_4(m)\ne 0. \end{array} \right.$$
\end{enumerate}
\end{corollary}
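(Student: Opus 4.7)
The plan is to reduce Corollary \ref{cor4.4} directly to Corollary \ref{cor4.3} by exploiting the elementary relationship between the inverse Fourier transform of the normalized surface measure $\sigma_c$ and that of the characteristic function of the cone $C$. Since the normalized surface measure is identified with the function $\sigma_c(x) = (q^d/|C|)\, C(x)$ on $(\mathbb F_q^d, dx)$, linearity of the inverse Fourier transform immediately gives
\[
\sigma_c^\vee(m) \;=\; \frac{q^d}{|C|}\, C^\vee(m) \qquad\text{for all } m\in \mathbb F_q^d.
\]

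First I would invoke Lemma \ref{remark4.2}, which asserts $|C|\sim q^{d-1}$ for $d\ge 3$, to conclude that the normalization factor satisfies $q^d/|C| \sim q$. Substituting this into the identity above yields $|\sigma_c^\vee(m)| \sim q\, |C^\vee(m)|$ for every $m \in \mathbb F_q^d \setminus \{(0,\ldots,0)\}$; in the odd-dimensional case with $\Gamma_4(m)=0$, the equality $\sigma_c^\vee(m) = (q^d/|C|)\,C^\vee(m)$ combined with $C^\vee(m)=0$ gives exactly $\sigma_c^\vee(m)=0$.

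It remains to plug in the four cases from Corollary \ref{cor4.3}. For even $d\ge 4$ and $m\ne (0,\dots,0)$, multiplying $|C^\vee(m)| \sim q^{-d/2}$ (when $\Gamma_4(m)=0$) and $|C^\vee(m)| \sim q^{-(d+2)/2}$ (when $\Gamma_4(m)\ne 0$) by the factor $q$ produces $q^{-(d-2)/2}$ and $q^{-d/2}$ respectively, which is precisely statement (1). For odd $d\ge 3$ and $m\ne (0,\dots,0)$, the case $\Gamma_4(m)=0$ is handled as above, while $|C^\vee(m)| = q^{-(d+1)/2}$ when $\Gamma_4(m)\ne 0$ yields $|\sigma_c^\vee(m)| = q^{-(d-1)/2}$, establishing statement (2).

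Since every step is a direct substitution, there is no genuine obstacle; the entire argument is a short bookkeeping computation following Lemma \ref{remark4.2} and Corollary \ref{cor4.3}. The only mild subtlety is to keep track of the distinction between $\sim$ in the even-dimensional cases (inherited from Corollary \ref{cor4.3}) and exact equality in the odd-dimensional cases (where Lemma \ref{lem4.1} gives precise values, not just estimates), and to treat the vanishing case $\Gamma_4(m)=0$ in odd dimensions correctly by using the identity rather than the multiplicative estimate.
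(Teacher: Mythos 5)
Your proposal is correct and is essentially identical to the paper's own proof, which likewise combines the identity $\sigma_c^\vee(m)=\frac{q^d}{|C|}C^\vee(m)$ with $|C|\sim q^{d-1}$ and Corollary \ref{cor4.3}. The one point you could make fully explicit is that the exact equalities in the odd-dimensional case rely on $|C|=q^{d-1}$ holding exactly there (as follows from evaluating Lemma \ref{lem4.1} at $m=(0,\ldots,0)$), not merely $|C|\sim q^{d-1}$.
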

\begin{proof} Since $\sigma_c^\vee(m)=\frac{q^d}{|C|} C^\vee(m)$ and $|C|\sim q^{d-1}$ for $d\ge 3,$     the statement follows  immediately  from Corollary \ref{cor4.3}.
\end{proof}

We also need the following result.
\begin{lemma}\label{Lem4.5}Let $C^*=\{m\in \mathbb F_q^d:  \Gamma_4(m)=0\}.$ If the dimension, $d\ge 4$, is even, then we have
$$ \sum_{m\in C^*} |E^\vee(m)|^2 \lesssim q^{-d-1}|E|+ q^{-\frac{3d}{2}} |E|^2\quad\mbox{for all}~~E\subset (\mathbb F_q^d, dx).$$
\end{lemma}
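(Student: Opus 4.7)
The plan is to reduce the sum to an estimate on the Fourier transform of the indicator of $C^*$, and then invoke the explicit Gauss sum formula provided by Lemma~\ref{lem4.1}. Identifying $C^*$ with its characteristic function and expanding $|E^\vee(m)|^2$, swapping the order of summation gives
\[
\sum_{m\in C^*}|E^\vee(m)|^2 \;=\; \frac{1}{q^{2d}}\sum_{x,y\in E}\sum_{m\in C^*}\chi\bigl(m\cdot(x-y)\bigr) \;=\; \frac{1}{q^{2d}}\sum_{x,y\in E}\widehat{C^*}(y-x),
\]
where $\widehat{C^*}(a)=\sum_{m\in C^*}\chi(-a\cdot m)$. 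The lemma thus follows from an adequate pointwise bound for $\widehat{C^*}$ together with a split into diagonal and off-diagonal pairs.

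For the Fourier transform of $C^*$ I would exploit that $C^*$ is a linear image of the cone $C$. The diagonal map $T$ sending $(m_1,\dots,m_{d-2},m_{d-1},m_d)$ to $(m_1,\dots,m_{d-2},m_{d-1},4m_d)$ satisfies $\Gamma_1\circ T=\Gamma_4$, so $C^*=T^{-1}(C)$, and a change of variable yields $\widehat{C^*}(a)=\widehat{C}(T^{-t}a)$. Combining this with the identity $\widehat{C}(b)=q^d\,C^\vee(-b)$ and Lemma~\ref{lem4.1}, one obtains, for even $d\ge 4$,
\[
\widehat{C^*}(a) \;=\; q^{d-1}\delta_{\bf 0}(a) \;+\; \begin{cases} (q-1)\,G^{d-2}, & \Gamma_1(a)=0,\\[2pt] -G^{d-2}, & \Gamma_1(a)\ne 0.\end{cases}
\]
Alternatively, completing the square directly via~\eqref{square} reproves this formula by the same Gauss sum argument that underlies Lemma~\ref{lem4.1}. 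Since $|G|=q^{1/2}$, this gives $\widehat{C^*}({\bf 0})\sim q^{d-1}$ and the uniform off-origin bound $|\widehat{C^*}(a)|\lesssim q^{d/2}$ for every $a\ne{\bf 0}$.

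With $\widehat{C^*}$ in hand I would split the double sum into its diagonal $x=y$ and off-diagonal $x\ne y$ pieces. The diagonal contributes
\[
\frac{|E|\,\widehat{C^*}({\bf 0})}{q^{2d}} \;\lesssim\; \frac{|E|\,q^{d-1}}{q^{2d}} \;=\; q^{-d-1}|E|,
\]
which is the first term on the right of the claim. For the off-diagonal, the bound $|\widehat{C^*}(y-x)|\lesssim q^{d/2}$ and the trivial count $|E|^2$ produce
\[
\frac{1}{q^{2d}}\sum_{\substack{x,y\in E\\ x\ne y}}|\widehat{C^*}(y-x)| \;\lesssim\; q^{-2d}\cdot q^{d/2}\cdot|E|^2 \;=\; q^{-3d/2}|E|^2,
\]
which is the second term. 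The only delicate step is the explicit evaluation of $\widehat{C^*}$, where some care is required because $C^*$ is cut out by $\Gamma_4$ rather than by $\Gamma_1$; everything else is a crude estimate, and in particular no refined incidence count for pairs of points of $E$ lying on translates of the cone is needed.
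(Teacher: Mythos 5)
Your proposal is correct and follows essentially the same route as the paper: expand $|E^\vee(m)|^2$, swap the summation order, split into the diagonal (giving $q^{-2d}|C^*||E|\sim q^{-d-1}|E|$) and the off-diagonal (bounded by $q^{-2d}|E|^2\max_{\beta\ne \mathbf{0}}|\sum_{m\in C^*}\chi(m\cdot\beta)|\lesssim q^{-3d/2}|E|^2$). The only cosmetic difference is that you obtain the key bound $|\widehat{C^*}(a)|\lesssim q^{d/2}$ for $a\ne\mathbf{0}$ by a linear change of variables reducing $\Gamma_4$ to $\Gamma_1$ and quoting Lemma~\ref{lem4.1} directly, whereas the paper simply re-runs the Gauss-sum computation from that lemma; both are valid and equivalent.
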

\begin{proof} It is clear that $|C^*|\sim q^{d-1} $ for even $d\ge 4$, because $|C^*|=|C|\sim q^{d-1}.$
It follows that
\begin{align*}\sum_{m\in C^*} |E^\vee(m)|^2 &=q^{-2d} \sum_{x, y\in E} \sum_{m\in C^*} \chi(m\cdot (x-y)) \\
&=q^{-2d} \sum_{x,y\in E:x=y} |C^*| + q^{-2d} \sum_{x,y\in E: x\ne y} \sum_{m\in C^*} \chi(m\cdot (x-y))\\
&\lesssim q^{-2d} q^{d-1} |E| + q^{-2d} |E|^2  \max_{\beta\ne (0,\ldots,0)} \left| \sum_{m\in C^*} \chi(m\cdot \beta)\right|.
\end{align*}
Adapting the arguments used to prove the first part of Lemma \ref{lem4.1},  we see that
$$\max_{\beta\ne (0,\ldots,0)} \left| \sum_{m\in C^*} \chi(m\cdot \beta)\right| \lesssim q^{\frac{d}{2}}.$$
Combining this with the above estimate, we complete the proof.
\end{proof}
\section{Proof of Theorem \ref{mainthm2}}

To prove the statement $(2)$ of Theorem \ref{mainthm2}, it suffices by Lemma \ref{Lem2.1} to show that
if $-1$ is a square number and the dimension, $d\ge 4$, is even, then  the cone $C \subset \mathbb F_q^d$ contains a subspace $\Pi$ such that $|\Pi|=q^{d/2}.$ Now, define
$$ \Pi=\left\{(t_1, i t_1, \ldots, t_{(d-2)/2},  i t_{(d-2)/2}, s, 0) \in \mathbb F_q^d: s, t_j\in \mathbb F_q, ~j=1,2,\ldots, (d-2)/2\right\},$$
where $i$ denotes an element of $\mathbb F_q$ such that $i^2=-1.$ It is clear that $\Pi$ is a $d/2$-dimensional subspace contained in the cone $C.$ Thus, we complete  the proof of the statement $(2)$ of Theorem \ref{mainthm2}.\\

Next, notice that 
if the statements (3), (4) of Theorem \ref{mainthm2} are true, then the statement (1) of Theorem \ref{mainthm2} follows from  Remark \ref{remark1.2} and the Marcinkiewicz interpolation theorem (see Theorem 4.13 in \cite{BS88}).  
In conclusion, to complete the proof of Theorem \ref{mainthm2} it remains to show that the statements (3),(4) of Theorem \ref{mainthm2} hold, which shall be proved in the following subsections.

\subsection{Proof of the statement $(3)$ of Theorem \ref{mainthm2}}
We aim to prove that if the dimension, $d\ge 4,$ is even, then
$$\|E\ast \sigma_c\|_{L^{d-2}(C, \sigma_c)}\lesssim \|E\|_{L^{d/(d-1)}(\mathbb F_q^d, dx)}\quad\mbox{for all}~~E \subset (\mathbb F_q^d, dx).$$
As before, we can write $\sigma_c=\widehat{\rm{K}} +1$, where
we define $\rm{K}(m)= \sigma_c^\vee(m)-\delta_{\bf 0}(m) $ for $m\in (\mathbb F_q^d, dm).$
It suffices to prove that
\begin{equation}\label{ref5.1}
\|E\ast 1\|_{L^{d-2}(C, \sigma_c)}\lesssim \|E\|_{L^{d/(d-1)}(\mathbb F_q^d, dx)}\quad\mbox{for all}~~E \subset (\mathbb F_q^d, dx)
\end{equation}
and
\begin{equation}\label{ref5.2}
\|E\ast \widehat{\rm{K}}\|_{L^{d-2}(C, \sigma_c)}\lesssim \|E\|_{L^{d/(d-1)}(\mathbb F_q^d, dx)}\quad\mbox{for all}~~E \subset (\mathbb F_q^d, dx).\end{equation}

Notice that $\max_{x\in C} |E\ast 1(x)|\le \|E\|_{L^1(\mathbb F_q^d, dx)}.$ Then the inequality \eqref{ref5.1} follows because
$$\|E\ast 1\|_{L^{d-2}(C, \sigma_c)} \le \|E\|_{L^1(\mathbb F_q^d, dx)} \|1\|_{L^{d-2}(C, \sigma_c)}
= \|E\|_{L^1(\mathbb F_q^d, dx)} \le \|E\|_{L^{d/(d-1)}(\mathbb F_q^d, dx)}.$$
Notice that the estimate \eqref{ref5.2} can be obtained by interpolating the following two inequalities:
\begin{equation}\label{eq5.3}
\|E\ast \widehat{\rm{K}}\|_{L^{\infty}(C, \sigma_c)}\lesssim q\|E\|_{L^1(\mathbb F_q^d, dx)}\quad\mbox{for all}~~E \subset (\mathbb F_q^d, dx)
\end{equation}
and
\begin{equation}\label{eq5.4}
\|E\ast \widehat{\rm{K}}\|_{L^2(C, \sigma_c)}\lesssim q^{-\frac{(d-4)}{2}}\|E\|_{L^{2d/(d+2)}(\mathbb F_q^d, dx)}\quad\mbox{for all}~~E \subset (\mathbb F_q^d, dx)
\end{equation}

The inequality \eqref{eq5.3} follows from the arguments used to prove the estimate \eqref{kkey1}.
Now we prove the inequality \eqref{eq5.4}. By the property of convolution functions and Lemma \ref{Lem3.1},  we can write
$$ \|E\ast \widehat{\rm{K}}\|_{L^2(C, \sigma_c)}=\|\widehat{E^\vee \rm{K}}\|_{L^2(C, \sigma_c)} \lesssim q^{1/2} \|E^\vee {\rm{K}}\|_{L^2(\mathbb F_q^d, dm)}.$$
Thus, to prove \eqref{eq5.4} it will be enough to  show that
\begin{align}\label{eq5.5} \|E^\vee {\rm{K}}\|^2_{L^2(\mathbb F_q^d, dm)}
&\lesssim q^{-d+3} \|E\|^2_{L^{2d/(d+2)}(\mathbb F_q^d, dx)}\\ 
\nonumber &=\frac{|E|^{(d+2)/d}}{q^{2d-1}}\quad\mbox{for all}~~E \subset (\mathbb F_q^d, dx).\end{align}
Observe by the definition of $\rm{K}$ that $\rm{K}(0, \ldots, 0)=0$ and $\rm{K}(m)=\sigma^\vee_c(m) $ for $m\ne (0,\ldots,0).$  Let us prove the estimate \eqref{eq5.5}.\\

\noindent{\bf (Case I)} Assume that $|E|\ge q^{d/2}$ for even $d\ge 4.$  Applying $(1)$ of Corollary \ref{cor4.4} and Plancherel's theorem, we obtain
$$\|E^\vee {\rm{K}}\|^2_{L^2(\mathbb F_q^d, dm)}= \sum_{m\in \mathbb F_q^d} |E^\vee(m)|^2 |{\rm K}(m)|^2 \lesssim q^{-d+2} q^{-d} |E|=q^{-2d+2} |E|.$$
Since $ q^{-2d+2}|E| \le q^{-2d+1} |E|^{(d+2)/d} $ for $|E|\ge q^{d/2}$,   the estimate \eqref{eq5.5} holds.\\

\noindent{\bf (Case II)} Assume that $|E|\le q^{d/2}$ for even $d\ge 4.$ Using (1) of Corollary \ref{cor4.4} and Lemma \ref{Lem4.5}, we see that
\begin{align*}\|E^\vee {\rm{K}}\|^2_{L^2(\mathbb F_q^d, dm)}&= \sum_{m\in \mathbb F_q^d} |E^\vee(m)|^2 |{\rm{K}}(m)|^2 \\
                                                                                                     &=\sum_{\Gamma_4(m)=0} |E^\vee(m)|^2 |{\rm{K}}(m)|^2 + \sum_{\Gamma_4(m)\ne0} |E^\vee(m)|^2 |{\rm{K}}(m)|^2\\
                                                                                                     &\lesssim q^{-d+2} \left(q^{-d-1}|E|+ q^{-\frac{3d}{2}} |E|^2\right) + q^{-d} \sum_{m\in \mathbb F_q^d} |E^\vee(m)|^2\\
                                                                                                     &= q^{-2d+1}|E| + q^{(-5d+4)/2}|E|^2 + q^{-2d}|E|\\
                                                                                                     &\sim q^{-2d+1}|E| + q^{(-5d+4)/2}|E|^2.  \end{align*}
Since  the quantity in the last line is $\lesssim q^{-2d+1}|E|^{(d+2)/d}$ if $|E|\le q^{d/2}$,  the estimate \eqref{eq5.5} holds. We have completed the proof of the statement (3) of Theorem \ref{mainthm2}.
\subsection{Proof of the statement $(4)$ of Theorem \ref{mainthm2}}
By duality, it suffices to prove the following restricted type estimate:
$$ \|A_C^*F\|_{L^{\frac{d^2-2d+2}{d}}(\mathbb F_q^d, dx)} \lesssim \|F\|_{L^{\frac{d^2-2d+2}{d^2-3d+4}}(C, \sigma_c)}\quad \mbox{for all}~~F\subset (C, \sigma_c),$$
where we recall that
$$A_C^*F(x)=\frac{q^d}{|C|^2} \sum_{y\in C} C(y-x) F(y) .$$
Since $F\subset C$ and $C=-C$, we see that $A_C^*F=\frac{q^{2d}}{|C|^2} F\ast C.$
Hence, our task is to prove that
$$ \left\|\frac{q^{2d}}{|C|^2} F\ast C \right\|_{L^{\frac{d^2-2d+2}{d}}(\mathbb F_q^d, dx)} \lesssim \|F\|_{L^{\frac{d^2-2d+2}{d^2-3d+4}}(C, \sigma_c)}\quad \mbox{for all}~~F\subset (C, \sigma_c).$$
For each $m\in (\mathbb F_q^d, dm),$ define $H(m)=C^\vee(m) -\frac{|C|}{q^d} \delta_{{\bf 0}}(m).$ Then we can write $C(x)=\widehat{H}(x) + \frac{|C|}{q^d}$ for $x\in (\mathbb F_q^d, dx).$
To complete the proof, it is enough to show 
\begin{equation}\label{eq5.6}
\left\|\frac{q^{d}}{|C|} F\ast 1 \right\|_{L^{\frac{d^2-2d+2}{d}}(\mathbb F_q^d, dx)} \lesssim \|F\|_{L^{\frac{d^2-2d+2}{d^2-3d+4}}(C, \sigma_c)}\quad \mbox{for all}~~F\subset (C, \sigma_c)
\end{equation}
and
\begin{equation}\label{eq5.7}
\left\|\frac{q^{2d}}{|C|^2} F\ast \widehat{H} \right\|_{L^{\frac{d^2-2d+2}{d}}(\mathbb F_q^d, dx)} \lesssim \|F\|_{L^{\frac{d^2-2d+2}{d^2-3d+4}}(C, \sigma_c)}\quad \mbox{for all}~~F\subset (C, \sigma_c).
\end{equation}
Since $F\ast 1(x)=\frac{|F|}{q^d}$ for all $x\in (\mathbb F_q^d, dx),$  the inequality \eqref{eq5.6} follows by observing
$$ \left\|\frac{q^{d}}{|C|} F\ast 1 \right\|_{L^{\frac{d^2-2d+2}{d}}(\mathbb F_q^d, dx)}=~\frac{|F|}{|C|} \le \left(\frac{|F|}{|C|}\right)^{\frac{d^2-3d+4}{d^2-2d+2}} = \|F\|_{L^{\frac{d^2-2d+2}{d^2-3d+4}}(C, \sigma_c)}.$$
It remains to prove the inequality \eqref{eq5.7}.  To do this, we claim that the following two estimates hold:
\begin{equation}\label{eq5.8}
\left\|\frac{q^{2d}}{|C|^2} F\ast \widehat{H} \right\|_{L^{\infty}(\mathbb F_q^d, dx)} \lesssim q \|F\|_{L^1(C, \sigma_c)}\quad \mbox{for all}~~F\subset (C, \sigma_c)
\end{equation}
and
\begin{equation}\label{eq5.9}
\left\|\frac{q^{2d}}{|C|^2} F\ast \widehat{H} \right\|_{L^2(\mathbb F_q^d, dx)} \lesssim q^{\frac{-d^2+4d-2}{2d}} \|F\|_{L^{\frac{2d}{d+2}}(C, \sigma_c)}\quad \mbox{for all}~~F\subset (C, \sigma_c),
\end{equation}
which shall be proved below.
Notice that  the inequality \eqref{eq5.7} is obtained by interpolating \eqref{eq5.8} and \eqref{eq5.9}. 
To obtain the inequality \eqref{eq5.8},  we use Young's inequality and  observe that $\|\widehat{H}\|_{L^\infty(\mathbb F_q^d, dx)} \lesssim 1$. Then we see that
\begin{align*}\left\|\frac{q^{2d}}{|C|^2} F\ast \widehat{H} \right\|_{L^{\infty}(\mathbb F_q^d, dx)}&\le \frac{q^{2d}}{|C|^2}~ \|F\|_{L^1(\mathbb F_q^d, dx)}~ \|\widehat{H}\|_{L^\infty(\mathbb F_q^d, dx)}\\
&\lesssim \frac{q^d |F|}{|C|^2} =\frac{q^d}{|C|} ~\|F\|_{L^1(C, \sigma_c)}.\end{align*}
Since $|C|\sim q^{d-1}$,  the inequality \eqref{eq5.8} follows. Finally, we prove the inequality \eqref{eq5.9}. By Plancherel's theorem and the property of convolution functions, we can write
$$ \left\|\frac{q^{2d}}{|C|^2} F\ast \widehat{H} \right\|_{L^2(\mathbb F_q^d, dx)} \sim~ q^2 \|F^\vee H\|_{L^2(\mathbb F_q^d, dm)}. $$
Comparing this with the right-hand side of \eqref{eq5.9}, we see that it suffices to show 
\begin{align}\label{eq5.10} \|F^\vee H\|^2_{L^2(\mathbb F_q^d, dm)}&\lesssim q^{\frac{-d^2-2}{d}} \|F\|^2_{L^{\frac{2d}{d+2}}(C, \sigma_c)}\\
\nonumber &\sim \frac{|F|^{(d+2)/d}}{q^{2d+1}} \quad \mbox{for all}~~F \subset (C, \sigma_c).\end{align}
From the definition of $H$, we see that $H(0,\ldots,0)=0$ and $H(m)=C^\vee(m)$ for $m\ne (0,\ldots,0).$
Let us prove \eqref{eq5.10}. \\

\noindent{\bf (Case 1)} Assume that $|F|\ge q^{d/2}$ for even $d\ge 4.$
From (1) of  Corollary \ref{cor4.3} and Plancherel's theorem,  we see that
$$ \|F^\vee H\|^2_{L^2(\mathbb F_q^d, dm)} \lesssim q^{-d }\|F^\vee\|^2_{L^2(\mathbb F_q^d, dm)} =q^{-d} \|F\|^2_{L^2(\mathbb F_q^d, dx)}= q^{-2d} |F|.$$
Since $q^{-2d}|F| \le q^{-2d-1} |F|^{(d+2)/d}$ if $|F|\ge q^{d/2},$ the inequality \eqref{eq5.10} holds in this case.\\

\noindent{\bf (Case 2)} Assume that $|F|\le q^{d/2}$ for even $d\ge 4.$  From (1) of Corollary \ref{cor4.3} and Lemma \ref{Lem4.5}, it follows that
\begin{align*}
 \|F^\vee H\|^2_{L^2(\mathbb F_q^d, dm)}&= \sum_{m\in \mathbb F_q^d} |F^\vee(m)|^2 |H(m)|^2\\
                                                                    &\lesssim q^{-d} \sum_{ \Gamma_4(m)=0} |F^\vee(m)|^2  + q^{-d-2} \sum_{\Gamma_4(m)\ne 0} |F^\vee(m)|^2\\
                                                                    &\lesssim  q^{-d} \left( q^{-d-1}|F|+ q^{-\frac{3d}{2}} |F|^2  \right) + q^{-d-2} \sum_{m\in \mathbb F_q^d} |F^\vee(m)|^2\\
                                                                    &=\left(q^{-2d-1}|F| + q^{-5d/2} |F|^2\right) + q^{-2d-2} |F|\\
                                                                    & \sim q^{-2d-1}|F| + q^{-5d/2} |F|^2 .
\end{align*}
Since the last value is $\lesssim  q^{-2d-1} |F|^{(d+2)/d}$ if $|F|\le q^{d/2},$  the inequality \eqref{eq5.10} also holds.
Thus, we complete the proof of the statement (4) of Theorem \ref{mainthm2}.\\

{\bf Acknowledgement :} The authors would like to thank anonymous referees for their valuable comments which help to improve the manuscript.
We also wish to thank David Corvert for fixing grammatical errors  and clarifying ambiguous sentences in the previous version of this paper.

\end{document}